\newcommand{\G}{Gr\"obner basis }
\newcommand{\Gs}{Gr\"obner bases }
\newcommand{\Gns}{Gr\"obner basis}
\newcommand{\lt}{{\rm lt}}
\newtheorem{thm}{Theorem}[section]
\begin{document}
\title{Fast Gr\"obner Basis Computation for Boolean Polynomials}
\author{ Franziska Hinkelmann$^{a,b}$ \and Elizabeth Arnold$^{c*}$ }

\maketitle
{\footnotesize
     \centerline{$^a$Department of Mathematics,
      Virginia Polytechnic Institute and State University,}
  \centerline{Blacksburg, VA 24061-0123, USA}
}
{\footnotesize
    \centerline{$^b$Virginia Bioinformatics Institute,
      Virginia Polytechnic Institute and State University,}
  \centerline{Blacksburg, VA 24061-0477, USA}
}
{\footnotesize
  \centerline{$^c$Department of Mathematics and Statistics, 
  James Madison University,}
  \centerline{Harrisonburg, VA 22807, USA}
}
{\footnotesize
  \centerline{$^*$Corresponding author: arnoldea@math.jmu.edu}
}

\begin{abstract}
We introduce the Macaulay2 package {\it BooleanGB}, which computes a Gr\"obner basis for
Boolean polynomials using a binary representation rather than symbolic.  We compare the runtime of
several Boolean models from systems in biology and give an application to Sudoku. 

\end{abstract}
\section{Introduction} \label{intro}

Buchberger's algorithm will theoretically compute a Gr\"obner basis for any
ideal of a multivariate polynomial ring with rational coefficients.  However,
due to memory constraints, many examples still cannot be computed in real
time.  Since all computations are done symbolically over the rational numbers,
two difficulties occur.  The coefficients of the polynomials during the
computation can grow very large, and also the degrees of the polynomials can
grow very large allowing a large number of monomials in each polynomial.  If
an ideal consists of Boolean polynomials in the quotient ring,  $QR = \mathbb{F}_2[x_1,\dots, x_n]/
\langle x_1^2 + x_1, x_2^2 + x_2, \dots x_n^2 + x_n \rangle$, we can modify
Buchberger's algorithm so that  these two issues are avoided entirely.  These
ideas have been implemented and discussed before in \cite{polybori,  Sato,
Bernasconi}. 
In this paper, we describe a fast \G implementation for Boolean polynomials in
{\it C++} which is included in the Macaulay2 release version 1.4 \cite{M2}. 

\section{Algorithm}
\subsection{Binary Representation} \label{binrep}

Consider a polynomial in the ring $QR = \mathbb{F}_2[x_1,\dots, x_n]/ \langle x_1^2 + x_1, x_2^2 + x_2, \dots x_n^2 + x_n \rangle$.  Since the Boolean polynomials are multilinear, we can represent a monomials in $n$ variables as a binary string of length $n$.   We store these strings as integers.  Polynomials, then, are just lists of these integers.  Arithmetic on the binary representation of these polynomials is very fast.  To add two monomials, we can just concatenate the binary strings.  If we add two identical monomials, they sum to zero.   To multiply two monomials, we use the binary operation OR.  Since $x_i^2 = x_i$ for every $i$, if $x_i$ appears in either monomial, then $x_i$ will appear in the product.   For the Boolean \G algorithm, we only need to multiply a monomial by a polynomial.  To do this, we just multiply the monomial by each monomial in the polynomial using bitwise OR.  Then we add the monomials by concatenation, removing duplicate pairs.

The other operations that we need for Buchberger's algorithm are least common multiple and division of monomials.  The least common multiple of two monomials is again just bitwise OR.  In fact, the operation of least common multiple in this Boolean ring is just multiplication.  To divide one monomial by another, we must first check divisibility.  If a monomial $m_1$ is divisible by a monomial $m_2$, then any variable appearing in $m_2$ must also appear in $m_1$.  So $m_1$ is divisible by  $m_2$ if and only if bitwise $m_1 - m_2 > 0$.  Once divisibility has been established, then division is binary exclusive OR.   Note that this is nothing more than subtraction.   In order to implement Buchberger's first criterion, we also need an operation to determine if two monomials are relatively prime.  This can be determined by comparing the minimum value for each variable in the monomial.  If the minimum for each variable is 0, then the monomials are relatively prime.  Now we have enough operations to execute Buchberger's algorithm.
\subsection{Buchberger's Algorithm} \label{BBA}
Given a set of Boolean polynomials, $F$ in $QR=\mathbb{F}_2[x_1,\dots, x_n]/
\langle x_1^2 + x_1, x_2^2 + x_2, \dots x_n^2 + x_n \rangle$, we want to
compute a \G  for the ideal generated by $F$  using Buchberger's Algorithm.
The basic Buchberger Algorithm computes $S-$polynomials for every pair of
polynomials in $F$.    It is important to note that when working in a quotient
ring, we need to perform Buchberger's algorithm on the ideal generated by $F$ together with the field polynomials.  For example, consider the ideal generated by $xy + z$ in $\mathbb{F}_2[x,y,z]/ \langle x^2 + x, y^2 + y, z^2 + z \rangle$.  Since there is just one polynomial in the ideal, Buchberger's algorithm produces the the \G for this ideal as  $xy + z$.  However, $xz + z$ is also in this ideal, since $xz + z = x(xy + z) + (xy + z)$.  In fact, the \G for the ideal generated by $xy+z$  in $\mathbb{F}_2[x,y,z]/ \langle x^2 + x, y^2 + y, z^2 + z \rangle$ is $\{xy + z, yz + z, xz+z\}$.  $xz + z$ is the reduction of the S-polynomial of $xy + z$ and $x^2 + x$.

So the first difficulty in implementing Buchberger's algorithm using the binary representation of Boolean polynomials is how to encode these quadratic field polynomials.  This difficulty is bypassed using the following result.\\

\begin{thm} The  S-polynomial of any multilinear Boolean polynomial and a quadratic field polynomial is always multilinear. \end{thm}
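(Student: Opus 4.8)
The plan is to write the S-polynomial out from its definition and to follow precisely where a squared variable can be introduced, then to show that every such square disappears once the field relation $x_i^2 = x_i$ is applied. Set $g = x_i^2 + x_i$. Since $x_i \succ 1$ in any admissible monomial order, multiplying by $x_i$ gives $x_i^2 \succ x_i$, so $\lt(g) = x_i^2$, while $m := \lt(f)$ is multilinear because $f$ is. By definition,
\[
S(f,g) = \frac{\mathrm{lcm}(m,x_i^2)}{m}\,f - \frac{\mathrm{lcm}(m,x_i^2)}{x_i^2}\,g,
\]
so the whole argument reduces to understanding the two cofactors and the effect of $x_i^2 = x_i$ on each product.

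First I would split into the two cases governed by whether $x_i$ divides $m$. If $x_i \mid m$, write $m = x_i m'$ with $m'$ free of $x_i$; then $\mathrm{lcm}(m,x_i^2) = x_i m$, and the cofactors are $x_i$ (multiplying $f$) and $v := m'$ (multiplying $g$). If $x_i \nmid m$, then $\mathrm{lcm}(m,x_i^2) = x_i^2 m$, and the cofactors are $x_i^2$ (multiplying $f$) and $v := m$ (multiplying $g$). The structural observation I would record in both cases is that the cofactor of $f$ is a \emph{pure power of} $x_i$, while the cofactor $v$ of $g$ is multilinear and free of $x_i$.

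The heart of the proof is then to reduce each product modulo the field relations --- equivalently, to compute them with the bitwise OR arithmetic of Section~\ref{binrep}, which applies $x_i^2 = x_i$ automatically. For the $g$-term, $v(x_i^2 + x_i) = v\,x_i^2 + v\,x_i$ collapses to $v\,x_i + v\,x_i = 0$ over $\mathbb{F}_2$ (equivalently $g \equiv 0$ in $QR$), so it contributes nothing. For the $f$-term, multiplying the multilinear $f$ by a power of $x_i$ and reducing by $x_i^2 = x_i$ is exactly OR-ing $x_i$ into each monomial of $f$; since OR is idempotent, every positive power of $x_i$ collapses to $x_i$, and the result is multilinear. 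Hence $S(f,g)$ reduces to the multilinear polynomial obtained from $x_i f$, and in particular it is multilinear.

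The step I expect to require the most care is the precise reading of \emph{multilinear}. The raw S-polynomial in $\mathbb{F}_2[x_1,\dots,x_n]$ need not be multilinear: although the two leading terms cancel by construction, a non-leading monomial of $f$ that is divisible by $x_i$ produces a genuine $x_i^2$ (or, in the second case, $x_i^3$) factor in the $f$-term. So the statement must be understood in $QR$, and the content of the proof is exactly that every such power $x_i^k$ with $k \ge 1$ is removed in a single step --- those in the $g$-term by cancellation over $\mathbb{F}_2$, and those in the $f$-term by the idempotent OR sending $x_i^k \mapsto x_i$. Verifying this in both cases, and confirming that it is precisely what the binary representation computes, completes the argument and justifies never storing the quadratic field polynomials explicitly.
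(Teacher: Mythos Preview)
Your argument is correct, and in fact more careful than the paper's. The paper proceeds differently: it disposes of the case $x_i \nmid \lt(f)$ entirely by invoking Buchberger's first criterion (relatively prime leading terms, so the S-polynomial reduces to zero and need not be considered), and in the remaining case it writes down the S-polynomial directly with cofactor $1$ on $f$, obtaining a short multilinear expression. You instead treat both cases uniformly, track the actual cofactors $x_i$ or $x_i^2$ on $f$, and then pass to $QR$ to kill the $g$-term and collapse powers of $x_i$ via idempotence. The payoff of the paper's route is brevity and an explicit formula for what one must add to the basis; the payoff of yours is that you make precise the point the paper leaves implicit---namely that the \emph{raw} S-polynomial in $\mathbb{F}_2[x_1,\dots,x_n]$ can genuinely contain $x_i^2$ (whenever a non-leading monomial of $f$ already carries $x_i$), so ``multilinear'' must be read in $QR$, i.e.\ after the bitwise OR arithmetic has done its reduction. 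That observation is exactly what justifies the implementation choice, so your emphasis on it is well placed.
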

\begin{proof}  Let $f$ be a mutilinear Boolean polynomial with leading term $\lt(f)$.  If $\lt(f)$ and $x_i$ are relatively prime, then the S-polynomial of $f$ and $x_i^2 + x_i$ will reduce to 0 by Buchberger's first criterion \cite{buchberger}.  So we only consider quadratic field polynomials whose variable in the leading term is contained in $\lt(f)$.  Suppose $x_i$ is in $\lt(f)$.  Then
$S(f,x_i^2 +x_i) = f +(\frac{\lt(f)}{x_i})(x_i^2 +x_i) = f -\lt(f)+x_i$ which is multilinear.
\end{proof}

Therefore, no special encoding for quadratic field polynomials is necessary.

\subsection{Implementation}
The package {\it BooleanGB} contains the algorithm {\it gbBoolean}, which is an implementation of Buchberger's algorithm  for binary representations of Boolean polynomials over the quotient ring, 
$QR = \mathbb{F}_2[x1, x2, \ldots, x_n]/ \langle x_1^2 + x_1, x_2^2 + x_2, x_n^2 +
x_n\rangle$.  The algorithm is implemented in {\it C++} and is part of the Macaulay2
engine.  {\it gbBoolean}  computes a reduced Gr\"obner basis in lexicographic order for an
ideal of Boolean polynomials in $QR$.  All computations are done bitwise instead of symbolically, by
representing a monomial as bits in an integer and a polynomials as a list of
monomials.  On a 64 bit machine, the algorithm works in the ring with up to 64
indeterminates.  The algorithm uses Buchberger's first and second criteria \cite{buchberger} as described in
\cite[p. 109]{IVA}.  Here is algorithm computing the example from Section \ref{BBA}. 
\begin{verbatim}
i1 : loadPackage "BooleanGB";

i2 : R = ZZ/2[x,y,z];

i3 : QR = R/ideal(x^2-x, y^2-y, z^2-z);

i4 : I = ideal(x*y+z);

o4 : Ideal of QR

i5 : gbBoolean I

o5 = ideal (x*y + z, y*z + z, x*z + z)

o5 : Ideal of QR

\end{verbatim}

The code for {\it gbBoolean} is freely available with the source code distribution of Macaulay2 \cite{M2}.

\section{Applications}
Boolean \G algorithms such as {\it gbBoolean} can be used for any system of
Boolean polynomials.  In particular, exact cover problems, satisfiability
problems and problems in systems biology described below are examples.  In
some cases, it may be possible to turn a system of non-Boolean polynomials
into a Boolean system.  This involves dramatically increasing the number of
variables.  But in the example of Sudoku, described below, the time saved by
the bitwise computations outweighs the increase in variables.

\subsection{Boolean Models in Systems Biology}
Logical models \cite{GINsim} are widely used in systems biology.  They can be translated to
polynomial dynamical systems \cite{Alan}, in particular, logical models with binary
variables result in Boolean polynomials.  Key dynamic features of logical
models such as fixed points correspond to the points in algebraic varieties generated
by the polynomials describing the model.  To assure that {\it gbBoolean} is efficient
on ``practical'' ideals, we translate all binary logical models in the GINsim
repository \cite{GINsimRepo} to Boolean polynomial systems using ADAM
\cite{ADAM}, and compute the \G of the ideal describing the fixed
points.  For all logical models in the repository, {\it gbBoolean} is faster than current Macaulay2
implementations, run-times are are depicted in Fig. \ref{fig:benchmarks} and
table \ref{table:benchmarks}.

\subsection{Sudoku}
Sudoku is a popular game played on a $9 \times 9$ grid where the numbers 1-9
are filled in so that each number appears exactly once in each row column and
$3 \times 3$ block.  These constraints can be represented by polynomials, with
one variable for each cell in the grid.  (See \cite{ALT} and \cite{Gago} for
more details.)  For ease of demonstration, we use Shidoku as an example.  
Shidoku is played with the same rules as Sudoku, but on a $4 \times 4$ grid
with the numbers 1-4.

For Shidoku, we use 16 variables that can each take on only the values 1-4.
We can represent this fact, together with the constraints of the game, in a
total of 40 polynomials.    If we consider the ideal generated by these
polynomials in  $\mathbb{Q}[x_1, \dots, x_{16}]$ with the lexicographical
ordering, we cannot compute the \G for the ideal in Macaulay2 in a reasonable amount of
time.  We can, however, convert the problem to a system of Boolean polynomials
and use {\it gbBoolean} to compute the \Gns.

The Boolean system  has 64 variables,  representing each of the possible
values 1-4 for each of the 16 cells.  We consider the ideal of the polynomials
in $QR = \mathbb{F}_2[x1, x2, \ldots, x_n]/ \langle x_1^2 + x_1, x_2^2 + x_2, x_n^2 +
x_n\rangle$.  The Boolean system
gives us a total of 256 polynomials to represent the Shidoku constraints.  As
seen in Section \ref{results} below, {\it gbBoolean} computes the \G for the Boolean ideal in just 4.1 seconds.

\section{Results} \label{results}
We test {\it gbBoolean} on several random ideals and compare the run times with the standard \G implementation in Macaulay2.  In addition, we also test published Boolean logical models stemming from biological systems and compute the basis of the ideals corresponding to key dynamic features.  Finally, we test the algorithm on Sudoku, a problem that can be described by a system of Boolean polynomials.

We
compare {\it gbBoolean} to the symbolic computation in lexicographic order.
Since the graded reverse lexicographic (gRL) monomial order usually yields the 
fastest calculation,  we also compare {\it gbBoolean}  to
the combined times of computing a basis in gRL and lifting it to the quotient
ring with lexicographic order. 

Fig. \ref{fig:benchmarks} shows
the run-times on a 3.06 GHz Intel Core 2 Duo MacBook Pro.  In
almost all examples, {\it gbBoolean} is faster than current implementations in
Macaulay2. Cumulatively, {\it gbBoolean} is about four times faster
( $0.27614741\%$ ) than the \G
calculation in the ring with gRL order, and 50 times faster ( $0.017562079\%$ ) than the calculation in the
ring with lexicographic order.  
We do not list run-times for symbolic computations using the Sugar strategy
\cite{sugar} because it is much slower than the ``sugarless'' strategy for all
examples.

\begin{table}[ht]
\begin{center}
  \begin{tabular}{|c|c|c|c|}
  \hline & QR Lex & QR Lift & gbBoolean \\
  \hline
  II0 & 220.681 & 19.0562 & 4.4871\\
  \hline
  S2 & 0.002174 & 0.003237 & 0.00001\\
  \hline
  S3 & 0.0032 & 0.004926 & 0.000009\\
  \hline
  SS2 & 0.003438 & 0.005541 & 0.00001\\
  \hline
  Shidoku & 470.703 & 19.6797 & 4.11442 \\
  \hline
  TCR & 0.007104 & 0.012106 & 0.000208\\
  \hline
  THBool & 0.001946 & 0.003759 & 0.000009\\
  \hline
  BoolCC & 0.002007 & 0.003859 & 0.000012\\
  \hline
  erbb2 & 0.002609 & 0.003017 & 0.00001\\
  \hline
  yeastLi &0.002593 & 0.002013 & 0.00009 \\
  \hline
  \end{tabular}
  \caption{Run Times for Random, Biological, and Shidoku Examples}
  \label{table:benchmarks}
\end{center}
\end{table}

\begin{figure}[htb]
\centerline{ 
  \raise10pt
    \hbox { 
      \framebox{ \includegraphics[width=0.45\textwidth]{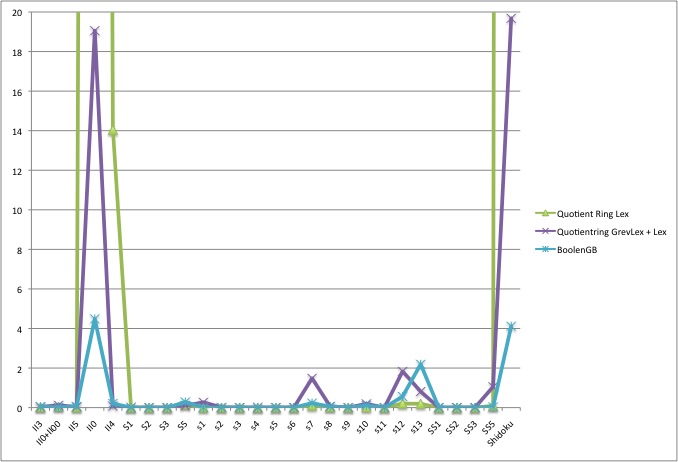} } 
      \qquad 
      \framebox{ \includegraphics[width=.45\textwidth]{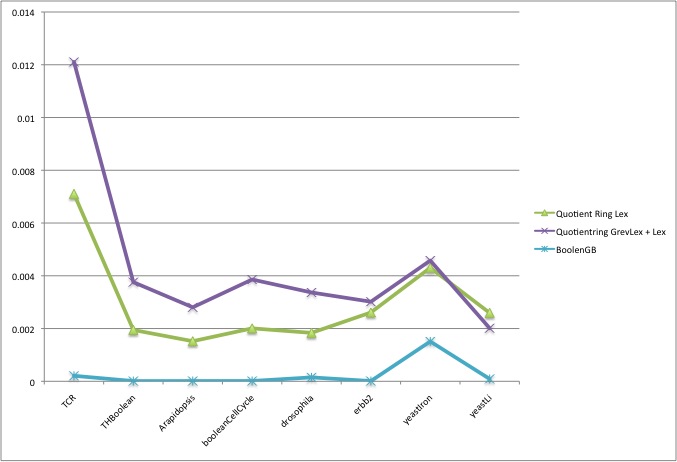}}
    } 
  } 
  \caption{Benchmarks:
  The left graph depicts run-times for random ideals and an
  application to Sudoku, the right graph for Boolean ideals
  originating from biological systems.}
\label{fig:benchmarks}
\end{figure}

\section{Conclusion}
Computing \Gs for Boolean polynomials bitwise rather than symbolically can
speed up computation time considerably.  In the example of Shidoku, even
though converting the problem from $\mathbb{Q}$ to $\mathbb{F}_2$ increases
the number of variables from 16 to 64, the time to compute the \G decreases.
The package {\it BooleanGB} in Macaulay2 has proven to be generally faster than the standard
algorithm in Macaulay2 over the quotient ring for Boolean polynomial ideals.  Further
improvements might be achieved by implementing the Sugar strategy \cite{sugar}
or other monomial orders. 

\small
\section*{Acknowledgments}
The authors thank Mike Stillman for adding the {\it C++}
implementation to the M2 engine and for his improvements of the
implementation.  Furthermore, the authors thank Samuel Lundqvist, Amelia Taylor,
Dan Grayson, and Reinhard Laubenbacher for stimulating discussions,
insightful comments, and organizing several Macaulay2 workshops.


\end{document}